\documentclass[conference]{IEEEtran}
\IEEEoverridecommandlockouts

\usepackage{amsmath,amssymb,amsfonts}
\usepackage{amsthm}
\usepackage{balance}

\newtheorem{theorem}{Theorem}[section]

\newtheorem{proposition}{Proposition}[section]

\usepackage{algorithm}
\usepackage{algpseudocode} 
\usepackage{booktabs}
\usepackage{graphicx}
\algrenewcommand{\algorithmicrequire}{\textbf{Require:}}
\algrenewcommand{\algorithmicensure}{\textbf{Ensure:}}

\algrenewcommand\algorithmicrequire{\textbf{Require:}}
\algrenewcommand\algorithmicensure{\textbf{Ensure:}}

\usepackage{cite}
\usepackage{graphicx}
\usepackage{textcomp}
\usepackage{xcolor}
\usepackage{url}

\begin{document}

\title{Physics-grounded Mechanism Design for \\ Spectrum Sharing between Passive and Active Users\\

\thanks{This work was supported in part by the NSF, under grants
AST-2229103 and AST-2229104.  }
}

\author{
\IEEEauthorblockN{Jiguang Yu}
\IEEEauthorblockA{\textit{SE Division} \\
\textit{Boston University}\\
Boston, Massachusetts \\
jyu678@bu.edu}
\and
\IEEEauthorblockN{Nicholas Brendle}
\IEEEauthorblockA{\textit{ECE Department} \\
\textit{The Ohio State University}\\
Columbus, Ohio \\
brendle.21@osu.edu}
\and
\IEEEauthorblockN{Joel T. Johnson}
\IEEEauthorblockA{\textit{ECE Department} \\
\textit{The Ohio State University}\\
Columbus, Ohio \\
johnson.1374@osu.edu}
\and
\IEEEauthorblockN{David Starobinski}
\IEEEauthorblockA{\textit{ECE Department} \\
\textit{Boston University}\\
Boston, Massachusetts \\
staro@bu.edu}
}

\maketitle

\begin{abstract}
 We propose a physics-grounded mechanism design for dynamic spectrum sharing that bridges the gap between radiometric retrieval constraints and economic incentives. We formulate the active and passive users coexistence problem as a Vickrey-Clarke-Groves (VCG) auctions mechanism, where the radiometer dynamically procures "quiet" time-frequency tiles from active users based on the marginal reduction in retrieval error variance. This approach ensures allocative efficiency and dominant-strategy incentive compatibility (DSIC). To overcome the computational intractability of exact VCG on large grids, we derive an approximation algorithm by using the monotone submodularity induced by the radiometer equation. AMSR-2–based simulations show that the approach avoids high-cost tiles by aggregating low-cost spectrum across time and frequency. In an interference-trap case study, the proposed framework reduces procurement costs by about 60\% over a fixed-band baseline while satisfying accuracy targets.
\end{abstract}

\begin{IEEEkeywords}
Earth Exploration Satellite Service (EESS), Radio-Frequency Interference (RFI), Dynamic Spectrum Sharing, Mechanism Design, Vickrey--Clarke--Groves (VCG) auctions.
\end{IEEEkeywords}

\section{Introduction}
\label{sec:intro}

Passive microwave radiometers aboard the Earth Exploration-Satellite Service (EESS) provide key observations of geophysical variables such as integrated water vapor, near-surface wind speed, and sea-surface temperature~\cite{ITU2017Handbook,nasa_aqua_mission_amsr-e_2001}. These instruments operate in frequency bands that increasingly overlap with active terrestrial services. With radio spectrum becoming increasingly congested, the traditional model of passive sensing (large frequency bands are set aside exclusively and permanently) is untenable, creating friction between scientific observation and commercial use~\cite{national_research_council_spectrum_2010}.

\begin{figure}[t]
    \centering
    \includegraphics[width=1\linewidth]{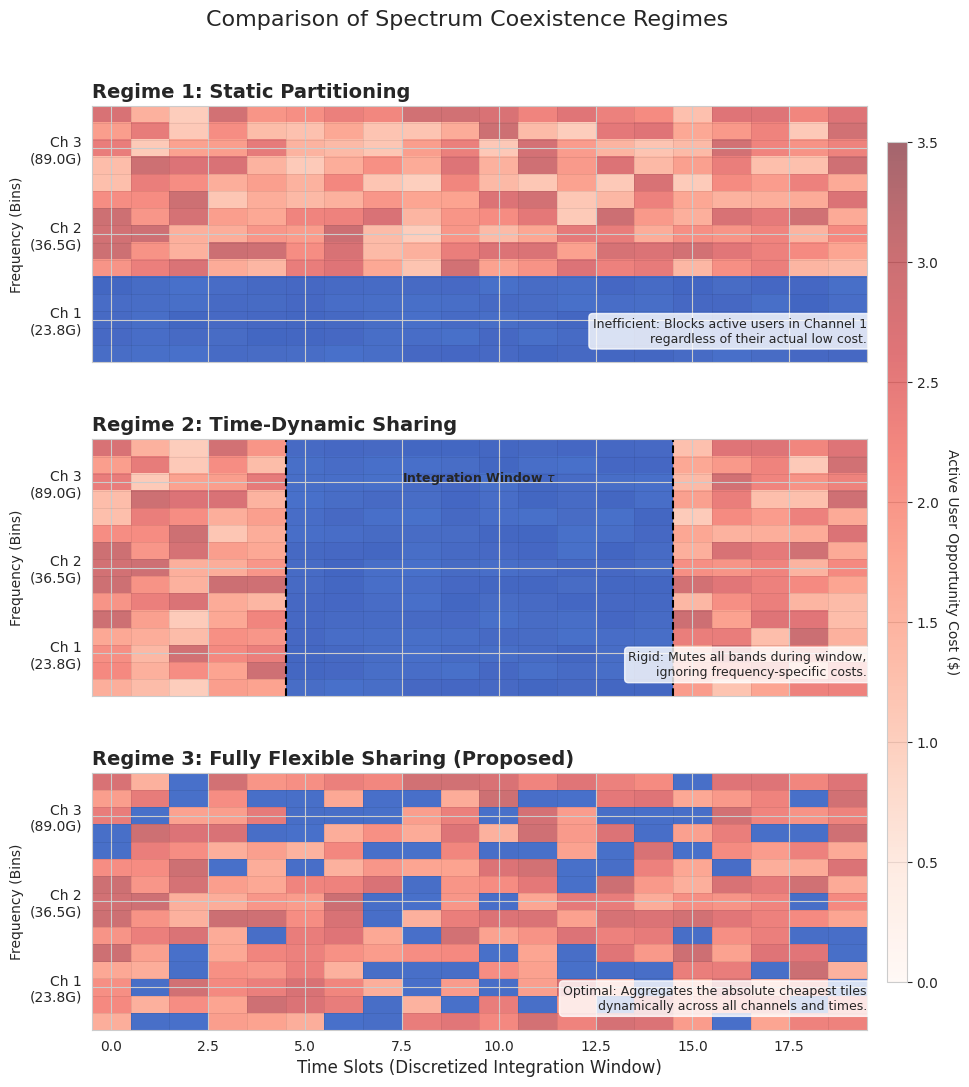}
     \caption{Comparison of coexistence paradigms on a discretized time–frequency grid with three channels. 
    \textbf{(Top) Regime 1: Static Partitioning.} The 23.8 GHz band (Ch 1) is permanently allocated to passive sensing (blue), resulting in the complete exclusion of active users in this band regardless of their costs.
    \textbf{(Middle) Regime 2: Time-Dynamic Sharing.} Spectrum access is regulated by a fixed integration window $\tau$. All channels are silenced simultaneously during the overpass, blocking high-value active users in Ch 2 and Ch 3 while failing to use low-cost periods outside the window.
    \textbf{(Bottom) Regime 3: Fully Flexible Sharing (Proposed).} The market-based mechanism dynamically selects a subset of tiles with the lowest opportunity costs (lightest red background) scattered across the grid.}
    \label{fig:sharing}
\end{figure}

This trade-off lead to a progression toward flexible coexistence. Fig.~\ref{fig:sharing} summarizes these sharing regimes over a radiometric integration window. While static partitioning globally excludes active systems and yields poor spectral efficiency, fully flexible time--frequency sharing allows the radiometer to dynamically procure measurement passbands based on the instantaneous opportunity costs of active users~\cite{Brendle2024IGARSSSharing}. 

We formalize this flexibility using the basic physics of measurement. During a single integration window $\tau$, the time-frequency plane is discretized into tiles of bandwidth $\delta f_x$ and duration $\delta t_x \le \tau$. Each tile $x$ has a clean duty cycle $\alpha_x \in [0,1]$, capturing partial-time muting or hardware latencies. For a procured set of quiet tiles $S$ temporally coincident with $\tau$~\cite{JohnsonEtAl2021JSTARS5001400}, the effective clean bandwidth $B_j(S)$ for channel $j$ is the baseline protected bandwidth plus the normalized allocated spectral volume~\cite{regulations2016radio}. The radiometer equation dictates that thermal-noise variance $\sigma_j^2$ is inversely proportional to the time-bandwidth product $B_j(S)\tau$~\cite{Kaisti2016,JohnsonEtAl2021JSTARS5001400}. This allows quiet spectrum to be flexibly aggregated across time and frequency. Nontrivial cross-channel elasticity ensures comparable product accuracy can be maintained under varying allocations~\cite{BrendleEtAl2024IGARSSAlternateSharing}.

Multi-channel retrievals map these spectral resources directly to mission performance. Because geophysical products are typically estimated as weighted linear combinations of channel brightness temperatures~\cite{Rodgers2000}, the total retrieval error variance is the weighted sum of uncorrelated channel noise variances~\cite{Kay1993}. This linkage quantifies the marginal value of spectrum as the retrieval-error reduction per additional Hz of effective bandwidth. Treating quiet spectrum as a dynamically procured commodity rather than a permanent entitlement raises three fundamental questions:

\begin{enumerate}
    \item How should the radiometer quantify its valuation for quiet tiles given multi-channel coupling and accuracy constraints?
    \item How can a mechanism elicit private cost information from active users to maximize social welfare while ensuring incentive compatibility?
    \item Can scalable algorithms approximate optimal allocations on large time--frequency grids?
\end{enumerate}

This work addresses these questions through the following contributions:

\begin{enumerate}
    \item \textbf{Physics-Driven Valuation Model:} We derive the radiometer's utility explicitly from retrieval physics~\cite{Rodgers2000,Maahn2020}, establishing a closed-form expression for the marginal value density of clean bandwidth based on mask constraints and retrieval errors.
    \item \textbf{VCG-Based Procurement Formulation:} We frame the problem as a multi-unit reverse auction with a tailored VCG mechanism, guaranteeing dominant-strategy incentive compatibility (DSIC) and allocative efficiency~\cite{Vickrey1961,Clarke1971,Groves1973}.
    \item \textbf{Scalable Approximation via Submodularity:} To bypass the computationally complex VCG benchmark, we leverage the radiometer equation~\cite{Kaisti2016} to show that the variance-reduction objective is monotone submodular ($\sigma^2 \propto 1/B$)~\cite{NemhauserWolseyFisher1978}. We design a scalable greedy posted-price algorithm with an analytical approximation bound. Numerically, this achieves a 4.45\% average optimality gap and reduces procurement costs by approximately 60\% in an ``interference trap'' scenario compared to static allocation.
\end{enumerate}

We envision that regulators would procure EESS users with credits or a virtual currency to participate in the bids. Active users winning the auction could be rewarded with subsidies or tax rebates from the government~\cite{merwaday2018incentivizing}. Design of such a reward system requires care, and should be considered as part of future work.

\begin{table}[t]
\centering
\caption{Definition of Parameters and Variables}
\label{tab:nomenclature}
\renewcommand{\arraystretch}{1.2} 
\resizebox{\linewidth}{!}{
\begin{tabular}{l l l}
\toprule
\textbf{Symbol} & \textbf{Definition} & \textbf{Unit / Domain} \\
\midrule
\multicolumn{3}{l}{\textit{Sets and Indices}} \\
$\mathcal{N}, i$ & Set of active users (sellers) and user index & $\{1, \dots, N\}$ \\
$\mathcal{J}, j$ & Set of radiometer channels and channel index & $\{1, \dots, J\}$ \\
$\Omega, x$ & Set of time--frequency tiles and tile index & Set of available time-frequency tiles \\
$S$ & Allocation set (subset of tiles designated as ``quiet'') & $S \subseteq \Omega$ \\
$\mathcal{A}$ & Set of feasible allocations satisfying accuracy constraints & $\mathcal{A} \subseteq 2^\Omega$ \\
\midrule
\multicolumn{3}{l}{\textit{Physics and Retrieval Model}} \\
$\tau$ & Radiometric integration window duration & s \\
$\delta f_x, \delta t_x$ & Frequency width and time duration of tile $x$ & Hz, s \\
$\alpha_x$ & Duty cycle (fraction of quiet time) for tile $x$ & $[0, 1]$ \\
$B_j(S)$ & Effective clean bandwidth for channel $j$ & Hz \\
$\kappa_j$ & Radiometer system noise constant ($\propto T_{\mathrm{sys}}^2$) & K$^2 \cdot$ Hz $\cdot$ s \\
$\sigma_j^2$ & Total measurement error variance in channel $j$ & K$^2$ \\
$c_{k,j}$ & Retrieval sensitivity coefficient for product $k$, channel $j$ & Unit$_k$ / K \\
$\mathrm{Var}[\hat{y}_k]$ & Retrieval error variance for geophysical product $k$ & Unit$_k^2$ \\
$\varepsilon_k^2$ & Maximum permissible error variance (Mission Constraint) & Unit$_k^2$ \\
$\phi_j(M_j)$ & Residual RFI penalty function & K$^2$ \\
\midrule
\multicolumn{3}{l}{\textit{Mechanism Design and Optimization}} \\
$v_0(S)$ & Radiometer (Buyer) valuation for allocation $S$ & Currency (\$) \\
$C_i(S_i)$ & Opportunity cost function for seller $i$ & Currency (\$) \\
$W(S)$ & Social welfare function & Currency (\$) \\
$p_i$ & VCG payment to seller $i$ & Currency (\$) \\
$\lambda_k$ & Dual variable (shadow price) for constraint $k$ & \$ / Unit$_k^2$ \\
$v_j$ & Marginal value density of bandwidth in channel $j$ & \$ / Hz \\
$F(S)$ & Discrete submodular utility function & Unitless / Scaled \\
\bottomrule
\end{tabular}
}
\end{table}

\section{Related Work}

This work bridges passive microwave remote sensing, dynamic spectrum access (DSA), mechanism design, and submodular optimization. We review related work to highlight the specific research gaps addressed by our framework.

\subsection{Passive Microwave Radiometry and Retrieval Modeling}

Passive microwave radiometry provides global atmospheric and surface observations~\cite{UlabyLong2014}, employing variational or regression-based inversion to map multi-channel brightness temperatures to geophysical variables~\cite{Blackwell2005}. While observation-error covariances are often simplified as constant diagonal matrices, modern models account for surface and scan dependencies~\cite{English2013}.

\textit{Distinction:} Prior work assumes a fixed observing-system design with prescribed channels and errors. In contrast, we treat spectral resources as dynamic, controllable inputs, quantifying and procuring clean bandwidth based on instantaneous mission needs.

\subsection{RFI Mitigation and Spectrum Regulation}

Passive sensors are highly vulnerable to radio-frequency interference (RFI) from adjacent active services~\cite{MisraDeMatthaeis2014}. Regulators traditionally manage this via static allocations and strict interference thresholds. While recent works propose dynamic protection zones and time-domain blanking~\cite{Niamsuwan2005,GunerJN07,MohammedAksoyPiepmeierJohnsonBringer2016}, they typically model RFI protection as a binary (pass/fail) constraint based on worst-case scenarios.

\textit{Distinction:} Existing binary schemes ignore cross-channel elasticity. Our mechanism internalizes retrieval physics to move beyond binary protection, enabling the system to trade off quieting costs across channels based on their instantaneous contribution to variance reduction.

\subsection{Dynamic Spectrum Access (DSA)}
Dynamic spectrum access (DSA) seeks to enhance the efficiency of spectrum sharing by relaxing static, long-term exclusivity. A classic model is hierarchical access: primary users retain priority, while secondary users transmit opportunistically under incumbent protection rules \cite{ZhaoSadler2007}. For example, \cite{Goldsmith2009} maximizes secondary performance subject to incumbent protection via interference constraints. The passive EESS users are treated as potential ``victims'' that need to be protected with some constraints.

\textit{Distinction:} In contrast, we model the EESS radiometer as a buyer that can pay to procure the quiet spectrum it needs. Demand is dynamic and driven by physics: the objective is to minimize retrieval error subject to mission constraints. This provides a direct linkage between spectrum-sharing decisions and sensing utility by converting protection requirements into an optimization objective.

\subsection{Mechanism Design for Spectrum Allocation}

Auction theory studies resource allocation under strategic behavior and private information \cite{Myerson1981}. Auctions and reverse auctions have been applied to improve spectrum-sharing efficiency. For example, \cite{chamberlain2024facilitating} proposes an economic model for CBRS-like multi-tier access with short-term, pay-as-you-go priority enhancement, integrating queuing and game-theoretic models.

In addition, a study of secondary spectrum markets \cite{GomezWK19} shows that commoditization of spectrum is, on its own, a significant barrier to liquid markets. Consequently, virtualization into finer-granularity time–frequency units (e.g., PRB-like resource blocks) can improve liquidity.

\textit{Distinction:} In contrast, we construct the buyer’s valuation directly from radiometric retrieval physics. By mapping spectral resources to variance reduction, our procurement formulation differs from prior spectrum-sharing mechanisms.

\subsection{Submodular Optimization in Networks}

Submodularity (equivalently, the diminishing returns property) is central to optimizing set functions under combinatorial constraints. It often arises in wireless networking (e.g., sensor selection, link scheduling, influence maximization) because information gain and coverage objectives exhibit diminishing returns.

\textit{Distinction:} We use the fact that thermal-noise reduction induces a monotone submodular variance-reduction objective. This bridges the gap between the computationally expensive VCG benchmark and a scalable implementation. It provides a physics-based justification for greedy allocation in our spectrum-sharing mechanism.

\section{System and Retrieval Model}
\label{sec:system}

This section formalizes the sensing geometry, resource discretization, and the physics-based mapping from spectral resources to geophysical retrieval accuracy.

\subsection{Time--Frequency Discretization and Effective Bandwidth}
Consider a radiometric integration window of duration $\tau$ during a satellite overpass. The radiometer operates over a set of channels $\mathcal{J} = \{1, \dots, J\}$, where each index $j$ denotes a frequency–polarization channel.

We discretize the relevant time--frequency domain into a finite set of non-overlapping tiles $\Omega$. Each tile $x \in \Omega$ has duration  $\delta t_x \le \tau$ and bandwidth $\delta f_x$. Let $j(x) \in \mathcal{J}$ denote the specific channel to which tile $x$ belongs. Active users (sellers) control these tiles. A tile $x$ is designated as ``quiet'' if the active user suppresses emissions to satisfy a prescribed interference mask $M_{j(x)}$ for a duty cycle $\alpha_x \in [0, 1]$.

The fundamental physical resource provided by a quiet tile is its spectral volume $\alpha_x \delta t_x \delta f_x$ [Hz$\cdot$s]. Radiometric sensitivity is determined by the accumulated spectral volume within the integration window. For a selected subset of quiet tiles $S \subseteq \Omega$, the effective clean bandwidth available to channel $j$ is the time-averaged spectral quantity:
\begin{equation}
B_j(S) \triangleq B_j^{(0)} + \frac{1}{\tau} \sum_{\{x \in S \mid j(x)=j\}} \alpha_x \delta t_x \delta f_x \quad [\mathrm{Hz}],
\label{eq:cleanBW}
\end{equation}
where $B_j^{(0)} > 0$ represents the baseline protected bandwidth ensuring system non-singularity.

\subsection{Noise and Residual Interference Model}
Measurement quality in channel $j$  is governed by the total error variance  $\sigma_j^2$, comprising thermal noise and residual RFI.

\emph{Thermal Noise:} By the radiometer equation, thermal-noise variance scales inversely with the time–bandwidth product:
\begin{equation}
\sigma_{j,\mathrm{th}}^2(S) = \frac{\kappa_j}{B_j(S) \cdot \tau},
\label{eq:radiometer}
\end{equation}
where $\kappa_j \propto T_{\mathrm{sys}}^2$ is a system constant derived from the receiver noise temperature and calibration parameters.

\emph{Residual RFI:} Even under mask compliance, residual emissions may leak into the radiometer passband. We model this residual error as a penalty term $\phi_j(M_j)$ explicitly in units of brightness temperature variance $[\mathrm{K}^2]$. Let $S_j^{\max}(f; M_j)$ denote the mask-limited power spectral density. The total residual interference power at the detector is $P_{\mathrm{RFI},j} = \int G_j(f) S_j^{\max}(f; M_j) df$. The function $\phi_j(\cdot)$ maps this power to an equivalent radiometric error:
\begin{equation}
\phi_j(M_j) = \gamma_j P_{\mathrm{RFI},j}(M_j) + \beta_j P_{\mathrm{RFI},j}^2(M_j) \quad [\mathrm{K}^2].
\label{eq:rfi-penalty}
\end{equation}
Here, $\gamma_j$ $[\mathrm{K}^2/\mathrm{W}]$ and $\beta_j$ $[\mathrm{K}^2/\mathrm{W}^2]$ are calibration coefficients. The linear term accounts for the stochastic noise variance increase, while the quadratic term captures the squared bias contribution to the mean-squared error (MSE). We assume $\phi_j(\cdot)$ is convex and decreases under tighter masks $M_j$.

Combining these terms, the total measurement error variance in channel $j$ is:
\begin{equation}
\sigma_j^2\big(B_j(S), M_j\big) = \frac{\kappa_j}{B_j(S) \tau} + \phi_j(M_j).
\label{eq:sigma-total}
\end{equation}

\subsection{Retrieval Error Propagation}

The system estimates geophysical parameters $\{y_k\}_{k=1}^K$. We use a linearized retrieval model for error budgeting:
\begin{equation}
\hat{y}_k = \mathbf{c}_k^\top \mathbf{T}_b = \sum_{j=1}^{J} c_{k,j} T_{b,j},
\label{eq:linear-retrieval}
\end{equation}
where $c_{k,j}$ is the sensitivity of parameter $k$ to channel $j$.

Assuming uncorrelated channel errors, the retrieval variance for geophysical product $k$ is obtained by propagating the channel variances from \eqref{eq:sigma-total}:
\begin{equation}
\mathrm{Var}[\hat{y}_k] = \sum_{j=1}^{J} c_{k,j}^2 \left( \frac{\kappa_j}{B_j(S) \tau} + \phi_j(M_j) \right).
\label{eq:retrieval-var}
\end{equation}

This mapping defines the feasible region for the resource allocation. To satisfy mission requirements, any valid allocation $S$ must satisfy:
\begin{equation}
\mathrm{Var}[\hat{y}_k] \leq \varepsilon_k^2, \quad \forall k \in \{1, \dots, K\},
\label{eq:accuracy-constraint}
\end{equation}
where $\varepsilon_k$ is the maximum permissible root-mean-square error (RMSE) for product $k$.

\section{Mechanism Design Formulation}
\label{sec:mech}

In this section, we model the dynamic coexistence problem as a multi-unit reverse auction (procurement) game. We define the agents, the allocation space, and the valuation functions derived from the physics model. We then present a VCG mechanism and prove its properties in efficiency and incentive compatibility.

\subsection{Agents and Allocation Space}
The market consists of a single buyer (the EESS radiometer) and a set of $N$ sellers (active users), denoted by $\mathcal{N} = \{1, \dots, N\}$.
The allocatable resources are the time--frequency tiles $\Omega$ defined in Section~\ref{sec:system}. An allocation is a subset of tiles $S \subseteq \Omega$ designated to be ``quiet''.
The set $\Omega$ is partitioned among the sellers based on regulatory licensing or geographic location, such that each tile is controlled by exactly one seller. Let $\Omega_i \subset \Omega$ denote the set of tiles controlled by seller $i$. For any global allocation $S$, let $S_i = S \cap \Omega_i$ denote the local contribution of seller $i$.

\subsection{Valuations and Costs}

\paragraph{Buyer Valuation (Physics-Driven)}
The radiometer's valuation function, $v_0: 2^\Omega \to \mathbb{R} \cup \{-\infty\}$, quantifies the scientific utility of an allocation. Unlike ad-hoc utility functions in standard spectrum auctions, $v_0$ is endogenous to the retrieval physics and strictly enforces mission constraints.
Defining a utility scaling factor $\lambda_0$ (e.g., dollar value per unit of variance reduction), the valuation is defined as:
\begin{equation}
v_0(S) = 
\begin{cases}
\displaystyle \lambda_0 \sum_{k=1}^{K} w_k \left( \varepsilon_k^2 - \mathrm{Var}[\hat{y}_k](S) \right), & \text{if } S \in \mathcal{A} \\
-\infty, & \text{otherwise},
\end{cases}
\label{eq:buyer-val}
\end{equation}
where $\mathcal{A} = \{ S \subseteq \Omega \mid \mathrm{Var}[\hat{y}_k](S) \le \varepsilon_k^2, \forall k \}$ is the set of feasible allocations. The set of weights $w_k$ standardizes units among the geophysical quantities retrieved and can set the relative importance of each if desired.
This formulation ensures the mechanism never selects an allocation that violates mission accuracy requirements, while strictly preferring solutions that offer a larger ``safety margin'' below the error threshold.

\paragraph{Seller Costs (Private Information)}
Each seller $i$ incurs an opportunity cost for quieting tiles $S_i$, denoted by $C_i(S_i; \theta_i)$. The parameter $\theta_i$ represents the seller's private type (e.g., traffic load, QoS priority), which is unknown to the radiometer.
We assume $C_i(\cdot)$ is non-decreasing and normalized such that $C_i(\emptyset) = 0$.
The valuation of seller $i$ for an allocation $S$ is simply the negative of their incurred cost:
\begin{equation}
v_i(S; \theta_i) = -C_i(S \cap \Omega_i; \theta_i).
\label{eq:seller-val}
\end{equation}

\textbf{Remark:} In practice, operators can translate localized throughput loss directly into opportunity costs. For example, if muting a specific time-frequency tile results in a data capacity drop of $\Delta C$ bits, and the network values delayed traffic at a price of $\pi$ per bit, the baseline cost to quiet that tile is simply $c_x = \pi \Delta C$.

\subsection{Social Welfare and Feasibility Assumption}
The social welfare of an allocation $S$ is the sum of the buyer's utility and all sellers' valuations:
\begin{equation}
W(S; \theta) \triangleq v_0(S) + \sum_{i=1}^{N} v_i(S; \theta_i) = v_0(S) - \sum_{i=1}^{N} C_i(S_i; \theta_i).
\label{eq:welfare}
\end{equation}
Note that for any infeasible set $S \notin \mathcal{A}$, the welfare is $-\infty$.

\textbf{Assumption 1 (Non-pivotal feasibility):} 
To ensure pivot payments are well defined, we assume no single seller is essential for feasibility. That is, for any seller $i$, the restricted feasible set $\mathcal{A}_{-i} = \{S \in \mathcal{A} \mid S_i = \emptyset\}$ is non-empty. This implies the radiometer can always satisfy the accuracy constraints $\varepsilon_k$ using resources from the remaining $N-1$ sellers, although potentially at a higher cost or lower safety margin.

\textbf{Remark:} Assumption 1 ensures bounded payments. If a single commercial network dominates, it could set the price arbitrarily high. To prevent this monopolistic hold-up, regulators can enforce a reserve price limit \cite{milgrom2004putting}. Should the winning auction price become unacceptably high, such a mechanism caps the payment and preserves the scientific needs of EESS users.

\subsection{Reverse VCG Mechanism}
We use a Direct Revelation Mechanism where each seller reports a type $\hat{\theta}_i$ (equivalently, declares a cost function $\hat{C}_i$). The mechanism consists of an allocation rule $S(\hat{\theta})$ and a payment rule $p(\hat{\theta})$.

\paragraph{Allocation Rule (Efficient benchmark)}
The benchmark allocation selects the feasible allocation that maximizes the reported social welfare:
\begin{equation}
S^*(\hat{\theta}) \in \arg\max_{S \in \mathcal{A}} \left( v_0(S) - \sum_{i=1}^{N} \hat{C}_i(S_i) \right).
\label{eq:vcg-alloc}
\end{equation}

\paragraph{Payment Rule (Incentive Compatibility)}
We apply the Clarke Pivot Rule tailored for procurement. The payment $p_i$ to seller $i$ is calculated as the externality they impose on the economy. Specifically, it is the difference between the social welfare of others when seller $i$ participates versus when they are absent:
\begin{equation}
p_i(\hat{\theta}) = \underbrace{\hat{C}_i(S^*_i)}_{\text{Compensated Cost}} + \underbrace{\left( W(S^*; \hat{\theta}) - W_{-i}(S^*_{-i}; \hat{\theta}_{-i}) \right)}_{\text{Information Rent}},
\label{eq:vcg-pay}
\end{equation}
where $W_{-i}(S^*_{-i}) = \max_{S' \in \mathcal{A}_{-i}} [v_0(S') - \sum_{j \ne i} \hat{C}_j(S'_j)]$ represents the maximum welfare achievable in a counterfactual economy without seller $i$. 
Due to Assumption 1, $W_{-i}$ is finite, ensuring the payment is well-defined.

\subsection{Theoretical Properties}
\begin{theorem}
Under Assumption 1 and direct revelation, the retrieval-based VCG benchmark satisfies:
\begin{enumerate}
    \item \textbf{Dominant-Strategy Incentive Compatibility (DSIC):} Reporting the true cost function $C_i$ is a utility-maximizing strategy for every seller, regardless of the actions of others.
    \item \textbf{Allocative Efficiency:} The mechanism implements the socially optimal allocation that balances retrieval accuracy against commercial opportunity costs.
    \item \textbf{Individual Rationality (IR):} Sellers are guaranteed non-negative utility (payment covers at least the reported cost).
\end{enumerate}
\label{thm:vcg}
\end{theorem}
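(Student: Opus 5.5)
Each of the three properties follows by unpacking the Clarke pivot payment \eqref{eq:vcg-pay} and using the welfare definition \eqref{eq:welfare}.

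\emph{Seller utility.} I first write seller $i$'s utility under reports $\hat\theta$ with true cost $C_i$ as $u_i = p_i(\hat\theta) - C_i(S^*_i(\hat\theta))$. Substituting \eqref{eq:vcg-pay} and expanding $W(S^*;\hat\theta) = v_0(S^*) - \sum_{l}\hat C_l(S^*_l)$, the terms $\hat C_i(S^*_i)$ cancel. This gives
\begin{equation*}
u_i = \Big[v_0(S^*) - \sum_{l\neq i}\hat C_l(S^*_l) - C_i(S^*_i)\Big] - W_{-i}(\hat\theta_{-i}).
\end{equation*}
The bracket is the \emph{true-for-$i$} welfare of the chosen allocation $S^*(\hat\theta)$, evaluated with $i$'s real cost and the others' reports. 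The second term $W_{-i}$ does not depend on $\hat\theta_i$, and by Assumption~1 it is finite, since $\mathcal{A}_{-i}\neq\emptyset$ and $\Omega$ is finite.

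\emph{DSIC.} Seller $i$ can affect $u_i$ only through the choice of $S^*$. If $i$ reports truthfully, the allocation rule \eqref{eq:vcg-alloc} maximizes exactly the bracketed expression over $\mathcal{A}$. Hence no misreport $\hat\theta_i$ can yield a larger bracket, whatever $\hat\theta_{-i}$ is. I will also check that the chosen $S^*$ is feasible, so the bracket is finite. The maximum over the finite set $\mathcal{A}$ exists, and $\mathcal{A}\supseteq\mathcal{A}_{-i}\neq\emptyset$ guarantees it is finite. Tie-breaking does not matter because any maximizer gives the same value of the bracket.

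\emph{Efficiency.} Under truthful reports, which form a dominant-strategy equilibrium by the previous step, \eqref{eq:vcg-alloc} selects $S^*\in\arg\max_{S\in\mathcal{A}} W(S;\theta)$. This is exactly socially optimal. The $-\infty$ convention for $S\notin\mathcal{A}$ means restricting the maximization to $\mathcal{A}$ loses nothing.

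\emph{IR (main obstacle).} Under truthful reporting, $u_i = W(S^*;\theta) - W_{-i}$, so it suffices to show $W(S^*)\ge W_{-i}$. Every $S'\in\mathcal{A}_{-i}$ lies in $\mathcal{A}$ and has $S'_i=\emptyset$. By normalization, $C_i(\emptyset)=0$, so $W(S';\theta) = v_0(S') - \sum_{l\neq i} C_l(S'_l)$. The counterfactual welfare is therefore the maximum of the full welfare $W$ over the subset $\mathcal{A}_{-i}\subseteq\mathcal{A}$, and hence $W(S^*)\ge W_{-i}$. This yields $u_i\ge 0$, i.e.\ $p_i\ge \hat C_i(S^*_i)$, which is the stated guarantee.

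The delicate point is making sure $W_{-i}$ in \eqref{eq:vcg-pay} is read as this restricted maximum of the same welfare function. Two things have to be checked for that reading: the buyer's valuation $v_0$ enters both economies identically, and $S^*_{-i}$ in the notation is only a label. Both hold from the definitions, which is where Assumption~1 and $C_i(\emptyset)=0$ are essential.
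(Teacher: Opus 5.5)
Your proposal is correct and follows the paper's proof essentially step for step. The shared structure is: the same cancellation of $\hat C_i(S^*_i)$, which leaves the mixed welfare minus the $\hat\theta_i$-independent constant $W_{-i}$ and gives DSIC; efficiency as an immediate consequence of truthful reporting; and IR from $\mathcal{A}_{-i}\subseteq\mathcal{A}$. You additionally use $C_i(\emptyset)=0$ explicitly to identify $W_{-i}$ with the restricted maximum of the full welfare, and you address finiteness and tie-breaking. Both points are ones the paper's appendix leaves implicit.
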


\begin{proof}
A detailed proof is provided in Appendix \ref{app:proofs}.
\end{proof}

\section{Deployable Posted-Price Procurement Protocol}
\label{sec:posted-price}

The VCG mechanism in Section~\ref{sec:mech} serves as an idealized economic benchmark for allocative efficiency. To overcome the scalability challenges of combinatorial bidding in fast-fading RF environments, we shift to a deployable protocol: a decentralized posted-price market. Active networks simply post take-it-or-leave-it prices, allowing the radiometer to dynamically procure bandwidth via a scalable greedy approximation.

\subsection{Phase I: Continuous Relaxation and Shadow Prices}
We first relax the discrete allocation problem into a continuous planner's problem. Assume the bandwidth $B_j$ can vary continuously. The objective is to minimize the aggregate cost $C_{\mathrm{tot}}(B)$ subject to the mission accuracy constraints defined in \eqref{eq:accuracy-constraint} and we assume $C_{\mathrm{tot}}(B)$ is convex and non-decreasing in $B$, which is consistent with increasing marginal opportunity costs.

The Lagrangian of this continuous problem is:
\begin{align}
\mathcal{L}(B,\boldsymbol{\lambda}) = C_{\mathrm{tot}}(B) + \sum_{k=1}^{K} \lambda_k \left( \mathrm{Var}[\hat{y}_k](B) - \varepsilon_k^2 \right),
\label{eq:Lagrangian}
\end{align}
where $\lambda_k \ge 0$ are the dual multipliers. Since $\mathrm{Var}[\hat{y}_k]$ is convex with respect to $B$ (as $1/B$ is convex), this is a convex optimization problem. It can be solved efficiently via standard interior-point methods to obtain the optimal dual vector $\boldsymbol{\lambda}^*$.

Substituting the variance expression and taking the partial derivative with respect to $B_j$ yields the first-order optimality condition. We define the marginal value density (shadow price) of clean spectrum in channel $j$ as the magnitude of the weighted marginal variance reduction:
\begin{equation}
v_j(\boldsymbol{\lambda}^*, B_j) \triangleq \left| \sum_{k=1}^{K} \lambda_k^* \frac{\partial \mathrm{Var}[\hat{y}_k]}{\partial B_j} \right| = \sum_{k=1}^{K} \lambda_k^* \frac{c_{k,j}^2 \kappa_j}{\tau B_j^2}.
\label{eq:optimality}
\end{equation}
This shadow price $v_j$ serves as a physics-driven price cap: it captures both the sensitivity of the instrument ($\propto 1/B_j^2$) and the mission-criticality of the channel ($\lambda_k^*$).

\subsection{Phase II: Submodularity of Discrete Utility}
We now return to the discrete domain. To guide the tile selection, we construct a scalar utility function $F(S)$ using the optimal duals $\boldsymbol{\lambda}^*$ derived in Phase I.
Let $F(S)$ denote the weighted variance reduction achieved by an allocation $S$:
\begin{equation}
F(S) \triangleq \sum_{k=1}^{K} \lambda_k^* \left( \mathrm{Var}^{\mathrm{base}}_k - \mathrm{Var}[\hat{y}_k](S) \right).
\label{eq:F-def}
\end{equation}
The procurement goal is to find the minimum-cost set $S$ that satisfies the accuracy constraints. This is equivalent to covering a target utility $\Gamma =\sum \lambda_k^* (\mathrm{Var}^{\mathrm{base}}_k - \varepsilon_k^2)$.

\begin{proposition}[Monotone Submodularity]
\label{prop:submodular}
The retrieval utility function $F: 2^{\Omega} \to \mathbb{R}_{\ge 0}$ defined in \eqref{eq:F-def} is \textbf{monotone non-decreasing} and \textbf{submodular}.
\end{proposition}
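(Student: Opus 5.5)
The plan is to reduce $F$ to a nonnegative combination of one-dimensional concave functions of modular (additive) set functions, and then use closure of monotone submodular functions under such combinations. First substitute \eqref{eq:retrieval-var} into \eqref{eq:F-def}. The residual RFI terms $\phi_j(M_j)$ do not depend on $S$, since the masks are fixed, so they cancel against the corresponding terms in $\mathrm{Var}^{\mathrm{base}}_k$. Here I take $\mathrm{Var}^{\mathrm{base}}_k$ to be the retrieval variance at $S=\emptyset$, i.e.\ with $B_j=B_j^{(0)}$. What remains is
\begin{equation*}
F(S)=\sum_{j=1}^{J} a_j\left(\frac{1}{B_j^{(0)}}-\frac{1}{B_j(S)}\right),\qquad a_j\triangleq \frac{\kappa_j}{\tau}\sum_{k=1}^{K}\lambda_k^* c_{k,j}^2\ \ge 0,
\end{equation*}
where $a_j\ge 0$ because $\lambda_k^*\ge 0$ (dual feasibility from Phase I), $\kappa_j>0$ and $c_{k,j}^2\ge 0$. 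With this normalization $F(\emptyset)=0$. Monotonicity then gives $F\ge 0$, which justifies the codomain $\mathbb{R}_{\ge 0}$.

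Next, for each channel $j$, write $B_j(S)=B_j^{(0)}+m_j(S)$ with $m_j(S)=\tau^{-1}\sum_{x\in S,\,j(x)=j}\alpha_x\delta t_x\delta f_x$, as in \eqref{eq:cleanBW}. Each $m_j$ is a modular set function with nonnegative weights $w_x=\alpha_x\delta t_x\delta f_x/\tau\ge 0$. Define $g_j(u)=1/B_j^{(0)}-1/(B_j^{(0)}+u)$ on $u\ge 0$. This function is non-decreasing and concave, since $g_j'(u)=(B_j^{(0)}+u)^{-2}>0$ and $g_j''(u)=-2(B_j^{(0)}+u)^{-3}<0$; this is exactly where the radiometer law $\sigma^2\propto 1/B$ from \eqref{eq:radiometer} enters. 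Then $F=\sum_j a_j\, g_j\circ m_j$.

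The key step is verifying the diminishing-returns inequality for $h=g\circ m$, with $g$ concave non-decreasing and $m$ modular with nonnegative weights. Take $A\subseteq A'\subseteq\Omega$ and $x\notin A'$. Then
\begin{equation*}
h(A\cup\{x\})-h(A)=g(m(A)+w_x)-g(m(A))\ \ge\ g(m(A')+w_x)-g(m(A')),
\end{equation*}
because $m(A)\le m(A')$ and concave functions have non-increasing increments over intervals of fixed length $w_x\ge 0$. If $x$ belongs to a different channel than $j$, the marginal gain of $x$ in the $j$-th term is zero for both $A$ and $A'$, so the inequality holds trivially. Monotonicity follows from $g$ being non-decreasing and $w_x\ge 0$. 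Summing over $j$ with nonnegative coefficients $a_j$ preserves both monotonicity and submodularity.

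I expect the only real obstacle to be bookkeeping rather than the analysis. One point is confirming that $\mathrm{Var}^{\mathrm{base}}_k$ and the $\phi_j$ terms are indeed independent of $S$; if $\mathrm{Var}^{\mathrm{base}}_k$ were defined differently, $F$ would shift by a constant, which does not affect submodularity or monotonicity, though it could affect nonnegativity. The other is ensuring $\lambda_k^*\ge 0$, which holds by construction of the Lagrangian \eqref{eq:Lagrangian}. The concavity argument itself is routine.
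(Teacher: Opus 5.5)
Your proof is correct and follows the paper's route: both reduce $F$ to a nonnegative combination of per-channel terms $-1/B_j(S)$ and get diminishing returns from the shape of $1/B$. The paper does this by computing the marginal gain $\Delta b_z/\bigl(b_A(b_A+\Delta b_z)\bigr)$ explicitly and showing it decreases in $b_A$, while you use the equivalent ``concave function of a nonnegative modular function'' lemma; your normalization $\mathrm{Var}^{\mathrm{base}}_k=\mathrm{Var}[\hat{y}_k](\emptyset)$, with the $\phi_j(M_j)$ terms cancelling, also justifies the codomain $\mathbb{R}_{\ge 0}$, which the paper asserts without proof.
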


\begin{proof}
A detailed proof is provided in Appendix \ref{app:submodularity}.
\end{proof}

\subsection{Phase III: Greedy Posted-Price Algorithm}
In Phase III, we assume incumbents publish take-it-or-leave-it ask prices for tiles (denoted $p(x)$).  Given the posted prices and the utility $F(S)$, then EESS procures the selected tiles at posted prices. Note that this Phase is an algorithmic approximation rather than an incentive-compatible mechanism design.

At each step $t$, the algorithm calculates the discrete marginal gain $\Delta_F(x \mid S_{t-1}) = F(S_{t-1} \cup \{x\}) - F(S_{t-1})$ for every available tile $x$. It selects the tile that maximizes the cost-efficiency ratio:
\begin{equation}
x_t^* = \arg\max_{x \notin S_{t-1}} \frac{\Delta_F(x \mid S_{t-1})}{p(x)},
\label{eq:greedy-ratio}
\end{equation}
where $p(x)$ is an exogenous input (e.g., regulated/observable posted prices). The process repeats until the mission accuracy constraints \eqref{eq:accuracy-constraint} are satisfied. In the worst case, this rule scans all remaining tiles per iteration. In practice, because each tile affects only its associated channel bandwidth, marginal gains can be updated incrementally rather than recomputing $F(S)$ from scratch.

This greedy strategy admits theoretical bounds grounded in submodular optimization theory~\cite{wolsey1982analysis}:

\begin{theorem}
Let $S^*$ be the optimal minimum-cost allocation that satisfies the feasibility constraints. The cost incurred by the greedy algorithm, $C(S_{\mathrm{greedy}})$, satisfies:
\begin{equation}
C(S_{\mathrm{greedy}}) \le \left( 1 + \ln \frac{\max_{x} \Delta_F(x \mid \emptyset)}{\Delta_{\min}} \right) C(S^*),
\end{equation}
where the term in parentheses represents the approximation factor, which is logarithmic in the ratio of the maximum to minimum marginal gains.
\end{theorem}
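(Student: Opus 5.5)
The plan is to reduce the statement to Wolsey's analysis of the greedy algorithm for submodular set cover~\cite{wolsey1982analysis}. We apply it to the truncated utility $\tilde F(S) \triangleq \min\{F(S),\Gamma\}$ with modular cost $C(S)=\sum_{x\in S} p(x)$.

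\textbf{Step 1 (setting up the cover problem).} Truncation by a constant preserves monotonicity and submodularity. By Proposition~\ref{prop:submodular}, $\tilde F$ is therefore monotone submodular. We take $F(\emptyset)=0$, assuming $\mathrm{Var}^{\mathrm{base}}_k$ is evaluated at $S=\emptyset$. Feasibility in \eqref{eq:accuracy-constraint} is then encoded as $\tilde F(S)=\Gamma$. Strictly speaking, the scalarized condition $F(S)\ge\Gamma$ only implies the weighted sum of the $K$ constraints. I will therefore either treat the scalarized cover as the operative feasibility notion, as the paper does when it says ``equivalent to covering a target utility $\Gamma$'', or else use the multi-constraint truncation $\sum_k \lambda_k^*\min\{\mathrm{Var}^{\mathrm{base}}_k-\mathrm{Var}[\hat y_k](S),\,\mathrm{Var}^{\mathrm{base}}_k-\varepsilon_k^2\}$. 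The latter is still monotone submodular, because each summand is a truncation of a monotone submodular function of one channel's bandwidth. I will also check that the greedy ratio rule \eqref{eq:greedy-ratio}, evaluated on the truncated function, coincides with the paper's rule until the final step.

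\textbf{Step 2 (per-step charging).} Let $S_t$ be the greedy set after $t$ steps and $r_t=\Gamma-\tilde F(S_t)$ the residual. Because $S^*$ is feasible, submodularity gives
\[
r_{t-1}\le \sum_{x\in S^*}\Delta_{\tilde F}(x\mid S_{t-1}),
\]
and hence the greedy choice satisfies $\Delta_{\tilde F}(x_t\mid S_{t-1})/p(x_t)\ge r_{t-1}/C(S^*)$. Rearranging yields
\[
p(x_t)\le C(S^*)\,\frac{r_{t-1}-r_t}{r_{t-1}},
\]
which gives the recursion $r_t\le r_{t-1}\,(1-p(x_t)/C(S^*))\le r_{t-1}e^{-p(x_t)/C(S^*)}$.

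\textbf{Step 3 (two-phase summation, the main obstacle).} Let $T$ be the final step. I split the greedy cost at the last index $\ell$ for which $r_{\ell}$ is still at least $\Delta_{\min}$. The natural choice is to define $\Delta_{\min}$ as the smallest positive marginal gain realized by greedy, so that $r_{T-1}\ge\Delta_{\min}$ and every positive step decreases the residual by at least $\Delta_{\min}$.
\begin{itemize}
\item For the first phase, iterating the exponential recursion gives $\sum_{t\le \ell}p(x_t)\le C(S^*)\ln(r_0/r_\ell)$.
\item At the same time, $r_0=\Gamma\le \sum_{x\in S^*}\Delta_F(x\mid\emptyset)$, which is at most $|S^*|\max_x\Delta_F(x\mid\emptyset)$.
\end{itemize}
To get the clean ratio in the statement, I will instead use Wolsey's formulation directly. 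He bounds the ratio by $1+\ln\bigl(\max_x \Delta(x\mid\emptyset)/\Delta_{\min}\bigr)$ after normalizing the greedy ratios by the cost: the ratio at the first step versus the ratio at the last step. The delicate point is matching the paper's $\Delta_{\min}$ to Wolsey's last-step quantity. I will state explicitly that $\Delta_{\min}$ denotes the minimum positive marginal gain encountered, and, if costs are nonuniform, the cost-normalized version. The final step contributes at most $C(S^*)$, since $p(x_T)\le C(S^*)\cdot 1$ by the charging inequality with $r_T=0$. This step is the source of the additive $1$.

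\textbf{Step 4 (conclusion).} Combining the two phases gives
\[
C(S_{\mathrm{greedy}})\le\bigl(1+\ln(\max_x\Delta_F(x\mid\emptyset)/\Delta_{\min})\bigr)C(S^*).
\]
The logarithm's argument is at least $1$, because diminishing returns make every realized marginal gain at most the corresponding singleton gain $\Delta_F(x\mid\emptyset)$.
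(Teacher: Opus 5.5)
The paper gives no argument for this theorem beyond citing Wolsey, so your Step~1 reduction is already more careful than the source. That reduction truncates at $\Gamma$ and flags that $F(S)\ge\Gamma$ only enforces a weighted sum of the $K$ constraints.

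\textbf{Main gap: Steps 3--4 cannot reach the stated ratio.} The argument you actually carry out gives a weaker bound.
The recursion $r_t\le r_{t-1}e^{-p(x_t)/C(S^*)}$ plus the final-step charge is exactly Wolsey's residual bound $1+\ln(\Gamma/r_{T-1})$. But $\Gamma$ can be many times $\max_x\Delta_F(x\mid\emptyset)$, for instance when dozens of tiles are needed, as in the paper's simulations. Your estimate $\Gamma\le|S^*|\max_x\Delta_F(x\mid\emptyset)$ then leaves an extra $\ln|S^*|$. So ``combining the two phases'' does not give Step~4; the clean ratio enters only by citation.

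The missing idea is to telescope over the greedy efficiencies rather than the residuals. Put $\phi_t=p(x_t)/\Delta_{\tilde F}(x_t\mid S_{t-1})$, which is non-decreasing by submodularity, set $\phi_0:=0$, and let $a_y^t=\Delta_{\tilde F}(y\mid S_t)$. The derivation uses $r_T=0$, then $r_{t-1}\le\sum_{y\in S^*}a_y^{t-1}$ (valid because $\phi_t-\phi_{t-1}\ge 0$), then a second Abel summation with $a_y^T=0$:
\begin{align*}
C(S_{\mathrm{greedy}})&=\sum_{t=1}^{T}\phi_t(r_{t-1}-r_t)=\sum_{t=1}^{T}(\phi_t-\phi_{t-1})\,r_{t-1}\\
&\le\sum_{y\in S^*}\sum_{t=1}^{T}\phi_t\bigl(a_y^{t-1}-a_y^{t}\bigr)\le\sum_{y\in S^*}p(y)\sum_{t:\,a_y^{t-1}>0}\frac{a_y^{t-1}-a_y^{t}}{a_y^{t-1}}.
\end{align*}
The last inequality holds because the greedy choice forces $\phi_t\le p(y)/a_y^{t-1}$. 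The inner sum is at most $1+\ln\bigl(a_y^0/\min\{a_y^t:a_y^t>0\}\bigr)$. The numerator is now a single tile's gain, and the $1$ is the step at which that tile's residual gain reaches zero.

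\textbf{Second gap: the choice of $\Delta_{\min}$.} The argument above proves the statement, for arbitrary posted prices, when $\Delta_{\min}$ is the smallest positive truncated gain $\Delta_{\tilde F}(x\mid S_t)$ over tiles $x$ and greedy prefixes $t<T$ (tiles of $S^*$ suffice).

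Your reading, the smallest gain realized by greedy, is justified only for uniform prices. In that case, bounding $r_{t-1}\le C(S^*)/\phi_t$ in the first sum gives $1+\ln(\phi_T/\phi_1)$, with $\phi_T/\phi_1=\max_x\Delta_{\tilde F}(x\mid\emptyset)/r_{T-1}$.

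With nonuniform prices your reading fails for general monotone submodular covers, which is all your argument uses. Cover the four cells of a $2\times 2$ grid, with rows of cost $1$ and columns of cost $1-\epsilon$ and $2-\epsilon$. Greedy buys both columns at cost $3-2\epsilon$, while the two rows cost only $2$. Yet both realized gains equal the largest singleton gain $2$, so the claimed factor would be $1$.

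Wolsey's cost-normalized bound $1+\ln(\phi_T/\phi_1)$ is correct in that case, but it is not the expression in the statement. So falling back on ``the cost-normalized version'' changes the theorem rather than proving it.
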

The algorithm in Phase III proceeds by iteratively selecting the tile with the largest value-to-cost ratio, where each step requires evaluating the marginal gain $\Delta F(x\mid S)$ over the remaining candidate tiles. Importantly, because each tile affects only the effective bandwidth of its associated channel, these marginal gains can be computed via incremental updates rather than recomputing $F(\cdot)$ from scratch. This result gives a logarithmic-factor performance guarantee and provides a practical alternative to the exponential-time VCG benchmark.

The analysis above assumes deterministic interference masks. To incorporate stochastic RFI, one can extend the feasibility formulation using a convex risk measure such as Conditional Value-at-Risk (CVaR). When the underlying loss is convex in the decision variables, CVaR admits a standard convex reformulation and yields a tractable program \cite{RockafellarUryasev2000CVaR}. In this case, the resulting dual variables $\lambda_k^*$ adapt to penalize risk-sensitive channels more heavily, biasing the Phase III selection toward more robust allocations.

\section{Numerical Evaluation}
\label{sec:numerical}

We provide visualizations of the proposed spectrum-sharing mechanism and validate it via numerical simulations. We demonstrate three properties: (1) the capability of the mechanism to prioritize spectral resources based on retrieval physics; (2) the allocative efficiency of flexible sharing compared to rigid baselines; and (3) the scalability of the posted-price approximation for large-scale grids.

We build a simulation environment modeled after the Advanced Microwave Scanning Radiometer 2 (AMSR-2) mission to evaluate the mechanism under realistic physical and economic conditions. The setup couples a multi-channel retrieval model with a spatially correlated commercial opportunity-cost field, as visualized in Fig.~\ref{fig:sim_setup}. To stress-test allocative efficiency, we construct an “Interference Trap” scenario. This consists of a specific spectral region where the opportunity costs mimic high-priority commercial services, creating a cost barrier that a naive fixed-band allocation would fail to avoid. The specific simulation parameters are configured as follows:

\begin{figure}[t]
    \centering
    \includegraphics[width=1\linewidth]{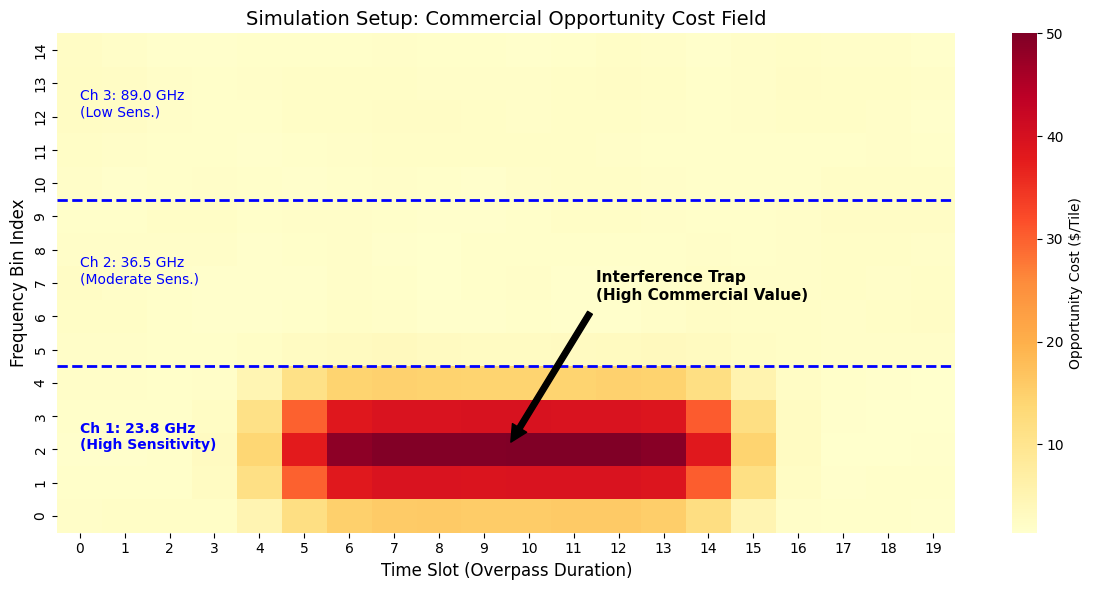}
    \caption{Simulation Setup: Commercial Opportunity Cost Field ($15 \times 20$ grid). The heatmap displays the opportunity cost for each time--frequency tile. The dashed blue lines delineate the three radiometric channels. A high-cost ``Interference Trap'' (dark red, high cost) is considered in the highly sensitive 23.8 GHz band to test the mechanism's ability to navigate spectral congestion.}
    \label{fig:sim_setup}
\end{figure}

    \textbf{1. Radiometer Physics and Simplified Model} We simulate a three-channel system targeting Integrated Water Vapor (IWV), modeled after the AMSR-2 configuration. While the actual mission utilizes dual-polarization feeds (V/H) resulting in six channels, we adopt a simplified effective model focusing on the three primary frequency bands (23.8, 36.5, and 89.0 GHz) to maintain tractability while capturing the core spectral dependencies.
    
    To address the non-linear nature of radiative transfer, the retrieval is formulated as a linearized estimator valid within a specific validity regime. We assume the retrieval operates under a small-perturbation assumption, where the higher-order non-linear terms (bias) in the Taylor expansion are negligible compared to the stochastic noise variance ($\sigma^2 \propto 1/B$). The sensitivity vector is set to $\mathbf{c}_{\mathrm{IWV}} = [0.45, -0.20, 0.05]^\top$, reflecting the primary dependence on the 23.8 GHz water vapor resonance, with system noise constants $\kappa \approx 500$ [K$^2\cdot$Hz$\cdot$s].
    
    \textbf{2. Mission Constraints:} The retrieval accuracy target is set to a maximum error variance of $\varepsilon_{\mathrm{IWV}}^2 = 0.25$ (corresponding to an RMSE of 0.5 units). This constraint defines the feasible region $\mathcal{A}$ for the mechanism.

    \textbf{3. Time--Frequency Grid:} The overpass integration window is discretized into $T=20$ time slots. Each of the three channels comprises 5 frequency bins, resulting in a total $15 \times 20 =300$ tiles.

    \textbf{4. Commercial Environment:} The active user opportunity costs are generated via a Gaussian-smoothed random field. The background cost ranges uniformly between \$1 and \$3 per tile. To simulate a challenging coexistence scenario, we include an ``Interference Trap''. It is a localized high-cost region in the 23.8 GHz band (time slots 5–15) with peak opportunity cost \$50 per tile (see Fig~\ref{fig:sim_setup}).

To illustrate the selection dynamics, we focus on clearing behavior within the 23.8 GHz channel. Figure~\ref{fig:market_clearing} visualizes the interaction between the physics-based demand and the commercial supply.

\begin{figure}[t]
    \centering
    \includegraphics[width=0.9\linewidth]{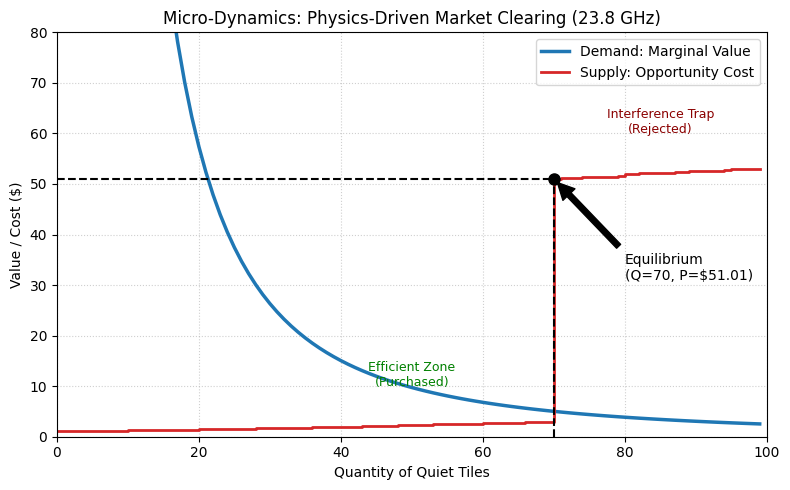}
    \caption{Micro-Dynamics of Market Clearing in the 23.8 GHz Channel. The blue curve illustrates the radiometer's marginal valuation density ($v \propto 1/B^2$), exhibiting diminishing returns. The red steps represent the sorted opportunity costs of active users. The mechanism reaches equilibrium at $Q = 70$ tiles. Note that the equilibrium marker is positioned at the cost of the first rejected tile ($P \approx \$51$).
    This point illustrates the impact of the ``Interference Trap'': the mechanism effectively purchases low-cost background spectrum and autonomously stops procurement precisely when the opportunity cost of the subsequent tile exceeds the marginal scientific value. }
    \label{fig:market_clearing}
\end{figure}

The market clearing process demonstrates three distinct behaviors driven by the valuation function from Section~\ref{sec:mech}. \textbf{High-Value Acquisition behavior.} In the initial phase ($Q < 20$), the clean bandwidth is low ($B_j \approx 0$), causing the marginal value of spectrum to increase very fast ($v_j \to \infty$). The mechanism aggressively procures tiles. \textbf{Cost-Sensitive Equilibrium behavior.} As bandwidth accumulates, the marginal reduction in retrieval variance diminishes. The demand curve intersects the vertical facet of the supply curve. It clears the market by buying up all of the low-cost tiles (up to $P \approx \$3.00$), but the marginal retrieval gain remains less than the next interference trap ( $\$ 50+$ ), resulting in an equilibrium at $Q=70$.

\textbf{Trap Avoidance behavior.} The equilibrium quantity, $Q = 70$, aligns exactly with the boundary of the low-cost background spectrum. The mechanism identifies that the marginal retrieval gain from the subsequent ``Interference Trap'' tiles does not justify their price, thereby maximizing social welfare by leaving the high-traffic spectrum to active users.

To quantify the advantage of our mechanism, we compare the Flexible VCG with a Fixed Band baseline model. In this baseline model, the radiometer is strictly confined to the primary 23.8 GHz channel (Channel 1) and must satisfy mission accuracy requirements regardless of commercial interference costs. We run a simulation of an ``Interference Trap'' scenario where a high-value commercial use increases the availability cost of specific Channel 1 tiles to approximately \$50, while adjacent Channel 2 (36.5 GHz) tiles are still available at \$1. The Fixed Band strategy does not allow substitutions across channels.

\begin{table}[h]
\centering
\caption{Comparison of Social Welfare under "Interference Trap" conditions (Trap Cost $\approx \$50$).}
\label{tab:efficiency_results}
\resizebox{\linewidth}{!}{%
\begin{tabular}{l c c c}
\toprule
\textbf{Metric} & \textbf{Fixed Band} & \textbf{Flexible VCG} & \textbf{Improvement} \\
\midrule
Value Created ($V$) & \$5,113 & \$5,001 & -2.2\% \\
Cost Incurred ($C$) & \$1,290 & \$464 & \textbf{-64.0\%} \\
\textbf{Net Welfare ($W$)} & \textbf{\$3,823} & \textbf{\$4,537} & \textbf{+18.7\%} \\
\bottomrule
\end{tabular}
}
\end{table}

The outcome is summarized in Table~\ref{tab:efficiency_results}. To reach the accuracy target, the Fixed Band strategy is forced to purchase 23 high-cost tiles within the interference trap, incurring a total cost of \$1,290. In contrast, the total cost of Flexible VCG is only \$464. By making full use of cross-channel substitution enabled by the physics-driven valuation, the mechanism identifies that while Channel 2 is physically less sensitive than Channel 1, the marginal cost of noise reduction in Channel 2 (at \$1/tile) is significantly lower than in the trapped Channel 1. Consequently, the mechanism procures the available low-cost Channel 1 tiles outside the trap and then switches to Channel 2 to purchase additional low-priced tiles to compensate for the sensitivity difference. This cross-channel substitution achieves comparable scientific accuracy ($V \approx \$5000$) while reducing procurement costs by 64.0\%, resulting in a net welfare gain of 18.7\%.

Finally, we evaluate the computational performance of the Scalable Posted-Price Algorithm (Section~\ref{sec:posted-price}) against the optimal VCG solver. We formulate the benchmark as a Minimum-Cost Coverage problem: finding the subset of tiles that satisfies the retrieval accuracy target $\mathrm{Var} \le \varepsilon^2$ at the lowest possible cost.
We vary the grid size from 5 to 21 tiles. For each grid size, we generate random instances with uniform bandwidth contributions $b_x \sim U(1, 5)$ and costs $c_x \sim U(1, 10)$.

\begin{figure}[t]
    \centering
    \includegraphics[width=1\linewidth]{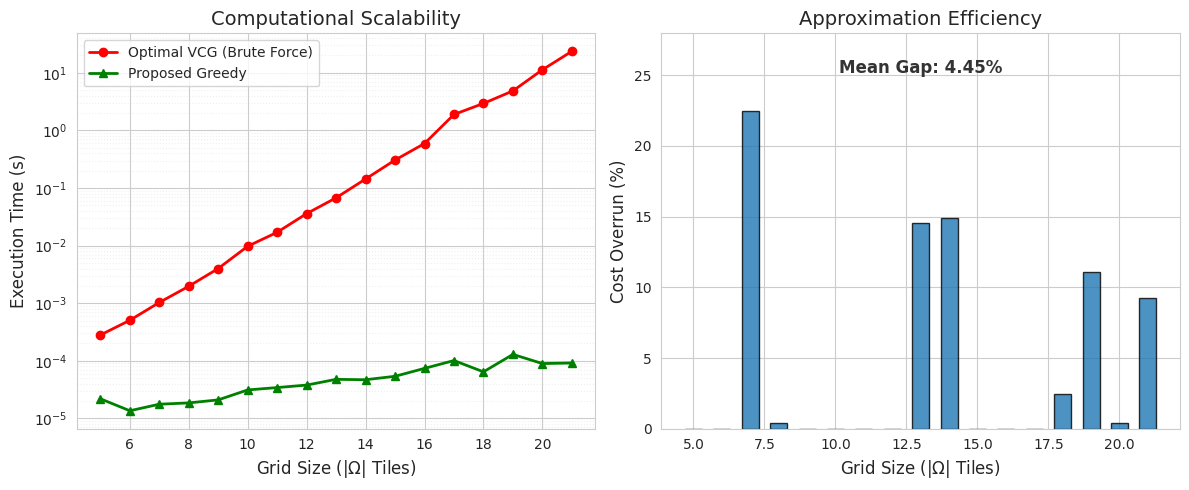}
    \caption{Computational scalability and efficiency. Left: Execution time (log scale) versus the grid size $|\Omega|$. The optimal solver grows exponentially with ($|\Omega|)$) in our implementation, reaching 25 seconds for $|\Omega|=21$. The proposed Scalable Posted-Price Greedy Algorithm (green) scales approximately linearly over the tested grid sizes, solving instances in microseconds. Right: Optimality gap (percentage cost overrun relative to the global optimum). The greedy solution achieves a mean optimality gap of 4.45\%.}
    \label{fig:scalability}
\end{figure}

The results in Fig.~\ref{fig:scalability} highlight the trade-off between optimality and tractability. In our implementation, the optimal solver’s runtime grows exponentially with $|\Omega|$and reaches 25.34 s at $|\Omega|=21$, making exact benchmark computation impractical for high-resolution grids (i.e., $|\Omega| \gg 100$). In contrast, the dual-guided greedy algorithm scales roughly linearly over the tested sizes and returns near-optimal costs. The mean optimality gap is 4.45\%, ranging from 0\% to 22\% in our trials. This behavior is consistent with the logarithmic approximation bound in Theorem V.1 and suggests limited loss in practice.

\section{Conclusion}
\label{sec:conclusion}

This paper introduces a physics-driven market framework for EESS spectrum access, shifting away from static regulatory protection. By explicitly mapping radiometric retrieval error ($\sigma^2 \propto 1/B$) to economic valuation, we model spectrum coexistence as a multi-unit reverse auction. We formulate a VCG mechanism that guarantees dominant-strategy incentive compatibility and allocative efficiency. To overcome the computational complexity of the VCG benchmark on high-resolution grids, we propose a scalable posted-price algorithm that leverages the monotone submodularity of variance reduction to guarantee a logarithmic approximation bound.

Simulations based on AMSR-2 parameters validate this approach. In a heavily congested ``interference trap'' scenario, our flexible mechanism autonomously shift procurement to lower-cost bands. This maintains the scientific accuracy target while reducing procurement costs by approximately 60\% compared to rigid baseline allocations. Ultimately, integrating measurement physics with market economics demonstrates that scientific and commercial spectrum needs are not inherently zero-sum, allowing for a complementary coexistence that maximizes both spectral utility and social welfare.

While our framework establishes a theoretical foundation, it relies on a linearized retrieval model and assumes ideal calibration. Physically, relying on a linearized retrieval model and ideal calibration may underestimate actual error variances in highly non-linear atmospheric regimes. The economic assumptions imply that our simulated cost reductions could degrade in real environments. Future work will address non-linear retrieval dynamics and unmodeled system errors, along with the hardware constraints of fast-switching digital backends. 

Beyond these technical refinements, transitioning our mechanisms into the real world will require an interdisciplinary exploration of legal and regulatory frameworks governing dynamic spectrum markets, alongside empirical validation in physical testbeds.

\appendices

\section{Proof of Mechanism Properties (Theorem~\ref{thm:vcg})}
\label{app:proofs}

Let $W(S; \hat{\theta}) = v_0(S) - \sum_{j} \hat{C}_j(S_j)$ denote the reported social welfare. The mechanism selects $S^* \in \arg\max_{S \in \mathcal{A}} W(S; \hat{\theta})$ and determines payments via the Clarke pivot rule: $p_i = \hat{C}_i(S^*_i) + [W(S^*; \hat{\theta}) - W_{-i}]$, where $W_{-i} = \max_{S \in \mathcal{A}, S_i=\emptyset} (v_0(S) - \sum_{j \ne i} \hat{C}_j(S_j))$.

\paragraph{Dominant-Strategy Incentive Compatibility (DSIC)}
The utility of the seller $i$' is $u_i = p_i - C_i(S^*_i)$. Substituting the payment rule:
\begin{align}
u_i &= \left( \hat{C}_i(S^*_i) + W(S^*; \hat{\theta}) - W_{-i} \right) - C_i(S^*_i) \nonumber \\
    &= \underbrace{\left( v_0(S^*) - \sum_{j \ne i} \hat{C}_j(S^*_j) - C_i(S^*_i) \right)}_{W(S^*; \theta_i, \hat{\theta}_{-i})} - \underbrace{W_{-i}}_{\text{Const}}.
    \label{eq:u_decomp}
\end{align}
The term $W_{-i}$ depends only on $\hat{\theta}_{-i}$. Thus, maximizing $u_i$ is equivalent to maximizing the mixed welfare $W(S^*; \theta_i, \hat{\theta}_{-i})$. Since the mechanism computes $S^*$ to maximize the reported welfare, truthful reporting ($\hat{\theta}_i = \theta_i$) aligns the objective of the mechanism with the objective of the agent, making it a dominant strategy. Allocative Efficiency follows immediately: since agents report truthfully, $S^*$ maximizes the true social welfare. 

\paragraph{Individual Rationality (IR)}
Assume truth-telling ($\hat{C}_i = C_i$). Eq.~\eqref{eq:u_decomp} simplifies to $u_i = \max_{S \in \mathcal{A}} W(S; \theta) - W_{-i}$.
The term $W_{-i}$ is the maximum welfare over the restricted set $\mathcal{A}'_i = \{S \in \mathcal{A} \mid S_i = \emptyset\}$. Since $\mathcal{A}'_i \subseteq \mathcal{A}$, the global maximum over $\mathcal{A}$ must satisfy:
\begin{equation}
\max_{S \in \mathcal{A}} W(S; \theta) \ge \max_{S \in \mathcal{A}'_i} W(S; \theta).
\end{equation}
Thus, $u_i \ge 0$. Assumption 1 ensures $\mathcal{A}'_i \neq \emptyset$, guaranteeing finite payments.

\section{Proof of Monotonicity and Submodularity (Proposition~\ref{prop:submodular})}
\label{app:submodularity}

In this section, we provide a direct proof of the structural properties of the retrieval utility function $F(S)$.

\subsubsection{Structural Decomposition}
Recall the definition of the weighted variance reduction from \eqref{eq:F-def}:
\begin{equation}
F(S) = \sum_{k=1}^{K} \lambda_k \left( \mathrm{Var}^{\mathrm{base}}_k - \sum_{j=1}^{J} c_{k,j}^2 \sigma_j^2(S) \right).
\end{equation}
Since the weights $\lambda_k$ and squared retrieval coefficients $c_{k,j}^2$ are non-negative, and $\mathrm{Var}^{\mathrm{base}}_k$ is a constant, $F(S)$ is a non-negative linear combination of terms dependent on individual channel variances. Thus, it suffices to prove the monotonicity and submodularity for the negative noise variance of a single channel $j$. Let us define the channel utility proxy $u_j(S)$:
\begin{equation}
u_j(S) \triangleq - \frac{1}{B_j(S)}.
\end{equation}
Note that physical proportionality constants (e.g., system noise temperature $\kappa_j$) are strictly positive and do not affect the convexity properties; they are omitted here for clarity. The effective bandwidth is given by:
\begin{equation}
B_j(S) = B_j^{(0)} + \sum_{x \in S : j(x)=j} \Delta b_x,
\end{equation}
where $B_j^{(0)} > 0$ is the baseline protected bandwidth, ensuring the denominator is always strictly positive, and $\Delta b_x > 0$ is the bandwidth contribution of tile $x$.

\subsubsection{Proof of Monotonicity}
Consider a set of tiles $A \subseteq \Omega$ and a new tile $z \notin A$.
\begin{itemize}
    \item If $j(z) \ne j$, the bandwidth is unchanged, so the marginal gain is zero.
    \item If $j(z) = j$, the new bandwidth is $B_j(A \cup \{z\}) = B_j(A) + \Delta b_z$.
\end{itemize}
Since $\Delta b_z > 0$, we have $B_j(A \cup \{z\}) > B_j(A)$. Because the function $g(y) = -1/y$ is strictly increasing for $y > 0$, it follows that:
\[
- \frac{1}{B_j(A) + \Delta b_z} > - \frac{1}{B_j(A)} \implies u_j(A \cup \{z\}) > u_j(A).
\]
Thus, the marginal gain is always non-negative, proving that $F(S)$ is monotone non-decreasing.

\subsubsection{Proof of Submodularity (Diminishing Returns)}
A set function $u_j$ is submodular if for any nested sets $A \subseteq \mathcal{Q} \subseteq \Omega$ and any element $z \notin \mathcal{Q}$, the marginal gain satisfies $\Delta u_j(z \mid A) \ge \Delta u_j(z \mid \mathcal{Q})$.

\textbf{Case 1:} $j(z) \ne j$. The marginal gain is identically zero for both sets, so the inequality holds trivially.

\textbf{Case 2:} $j(z) = j$. Let $b_A = B_j(A)$ and $b_{\mathcal{Q}} = B_j(\mathcal{Q})$. Since $A \subseteq \mathcal{Q}$ and bandwidth contributions are non-negative, we have $0 < b_A \le b_{\mathcal{Q}}$.
The marginal gain of adding tile $z$ to set $A$ is:
\begin{align}
\Delta u_j(z \mid A) &= \left( - \frac{1}{b_A + \Delta b_z} \right) - \left( - \frac{1}{b_A} \right) \nonumber \\
&= \frac{1}{b_A} - \frac{1}{b_A + \Delta b_z} \nonumber \\
&= \frac{\Delta b_z}{b_A (b_A + \Delta b_z)}.
\label{eq:marginal_gain_explicit}
\end{align}
Let $D(b) \triangleq b^2 + b \Delta b_z$ denote the denominator term. Since $b > 0$ and $\Delta b_z > 0$, $D(b)$ is a strictly increasing function of $b$.
Given $b_A \le b_{\mathcal{Q}}$, it implies $D(b_A) \le D(b_{\mathcal{Q}})$. Consequently, the reciprocal relationship yields:
\begin{equation}
\frac{\Delta b_z}{D(b_A)} \ge \frac{\Delta b_z}{D(b_{\mathcal{Q}})} \implies \Delta u_j(z \mid A) \ge \Delta u_j(z \mid \mathcal{Q}).
\end{equation}
This confirms that $u_j(S)$ satisfies the diminishing returns property. Since $F(S)$ is a non-negative linear combination of submodular 

\section*{AI Disclosure Statement}
During the preparation of this work, the authors used Gemini~3 to conduct literature search, refine language, and format the paper. These contents were reviewed and edited by the authors to ensure accuracy.
\balance

\bibliographystyle{IEEEtran}
\bibliography{refs}

\end{document}